\theoremstyle{plain}
\newtheorem{thm}{Theorem}[section]
\newtheorem{lem}{Lemma}[section]
\newtheorem{prop}{Proposition}[section]
\newtheorem{q}{Question}[section]
\newtheorem{thma}{Theorem}
\theoremstyle{proof}
\numberwithin{equation}{section}
\begin{document} 
\title[Rank zero quadratic twists of a Mordell curve]{A note on rank zero quadratic twists of a Mordell curve}
\author{Ankurjyoti Chutia, Azizul Hoque and Jyotishman Kalita}
\address{A. Chutia @Department of Mathematics, Gauhati University, Guwahati-781014, Assam, India}
\email{ankurjyoti878@gmail.com}
\address{A. Hoque @Department of Mathematics, Faculty of Science, Rangapara College, Rangapara-784505, Assam, India}
\email{ahoque.ms@gmail.com}
\address{J. Kalita @Department of Mathematics, Gauhati University, Guwahati-781014, Assam, India}
\email{jyotishmankalita2016@gmail.com}
\date{\today}
\keywords{Mordell curve, Rank of elliptic curve, Class number of quadratic field}
\subjclass[2010] {Primary: 11G05, 11R29, Secondary: 11G40, 14G05}
\maketitle

\begin{abstract}
We produce two families of rank zero quadratic twists of the Mordell curve $y^2=x^3+2$. 
At the end, we give numerical examples supporting the result.
\end{abstract}

\section{Introduction}
Let $E$ be an elliptic curve defined over $\mathbb{Q}$ given by the Weierstrass equation,
$$E:~y^2=x^3+ax+b,$$
where $a$ and $b$ are integers with $4a^3+27b^2\ne 0$. We denote the Mordell-Weil group of $E$ by $E(\mathbb{Q})$, and its rank by $r(E)$. Given a square-free integer $D$, the quadratic twist of $E$ by $D$ is an elliptic curve given by the following equation: 
$$E_D:~ y^2 = x^3 + aD^2x + bD^3.$$ 
One can naturally ask the following question:
\begin{q}\label{q1}
What can be said about the behaviour of the ranks of $E_D$ as $D$ varies over the square-free integers?
\end{q}

It is widely believed that ``almost all" elliptic curves over $\mathbb{Q}$ have rank $0$ or $1$. Further, Goldfeld \cite{Go79} conjectured that the average rank of the quadratic twists of any given elliptic curve over $\mathbb{Q}$ is $1/2$.  Consequently, for any elliptic curve over $\mathbb{Q}$, asymptotically, there are at least half of the quadratic twists of this curve which have rank $0$.  Thus, there is a comparatively weaker conjecture stating that there is a positive proportion of all quadratic twists of any elliptic curve over $\mathbb{Q}$ which have rank $0$. In the general case, this conjecture, though much weaker than the other famous ones related to elliptic curves, is still open. There have been numerous papers treating this problem for modular curves. Most of them focused on the non-vanishing of the $L$-functions (see \cite{Ko88, Ko99, OS98, Yu05}). Iwaniec and Sarnak \cite{IS00}, under the Riemann hypothesis, proved that half of the quadratic twists of an elliptic curve over $\mathbb{Q}$ have rank $0$. Unconditionally, such positive proportion results are only known for a few elliptic curves. 

In this article, we prove a result to enlist to the aforementioned literature on rank zero quadratic twists of a Mordell curve. We consider the family of Mordell curves $y^2=x^3+k$ with $k$ runs over the  non-zero square-free integers. This family of Mordell curves has been well studied, and its various arithmetic properties  have also been explored (see for instance, \cite{CH, KI, Qi16}) over the years. Similar studies (see, \cite{MM, ON, WQ}) have been done for the families $E_D:~ y^2=x^3+kD^3$ of quadratic twists of the curves $y^2=x^3+k$ for certain square-free integer $D$. Given a square-free odd positive integer $D\equiv 2\pmod 3$,  we investigate the families $E_{-D}:~ y^2=x^3-2D^3$ and $E_{3D}:~ y^2=x^3+54D^3$ of the quadratic twists of the curve $y^2=x^3+2$. We prove the following theorem. 
\begin{thm}\label{thm}
Let $D\equiv 2\pmod 3$ be a square-free odd positive integer and $3$ does not divide the class number of $\mathbb{Q}(\sqrt{-2D})$, then the ranks of both $E_{-D}:~ y^2=x^3-2D^3$ and $E_{3D}:~ y^2=x^3+54D^3$ are zero. Assuming Cohen-Lenstra heuristics, there are infinitely many such $D$ for which the ranks of $E_{-D}$ and $E_{3D}$ are zero.
\end{thm}   

\section{Proof of Theorem \ref{thm}}
We begin with the following reflection theorem of Scholz \cite{SC} which is used in the proof. 
\begin{thma}\label{thmsc} Let $D>1$ be a square-free integer. Let $r$ and $s$ be the $3$-ranks of the class groups of the imaginary quadratic field $\mathbb{Q}(\sqrt{-D})$ and the real quadratic field $\mathbb{Q}(\sqrt{3D})$. Then $s\leq r\leq s+1$.
\end{thma}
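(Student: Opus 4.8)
The plan is to prove this as an instance of Scholz's reflection (Spiegelung) principle, working over the cyclotomic field $k=\mathbb{Q}(\zeta_3)=\mathbb{Q}(\sqrt{-3})$. The guiding observation is that the biquadratic field $F=\mathbb{Q}(\sqrt{-3},\sqrt{-D})$ contains all three quadratic fields $k$, $K:=\mathbb{Q}(\sqrt{-D})$ and $K^*:=\mathbb{Q}(\sqrt{3D})$, since $\sqrt{3D}=-\sqrt{-3}\,\sqrt{-D}$. Thus $\mathrm{Gal}(F/\mathbb{Q})\cong(\mathbb{Z}/2\mathbb{Z})^2$, and I would fix $\Delta=\mathrm{Gal}(k/\mathbb{Q})=\{1,\sigma\}$, where $\sigma$ acts on $\mu_3$ by inversion, i.e. through the nontrivial mod-$3$ cyclotomic character $\omega$. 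Because $\gcd(|\Delta|,3)=1$, the group ring $\mathbb{F}_3[\Delta]$ is semisimple, so every $\mathbb{F}_3[\Delta]$-module splits into a $(+)$-eigenspace and a $(-)$-eigenspace for $\sigma$; these two eigenspaces are the pieces the whole argument will compare. I would also record at the outset that $\mathbb{Q}(\zeta_3)$ has class number one, so that the subfield $k$ contributes nothing to the $3$-class group of $F$.

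Next I would translate the two $3$-ranks into the language of $F$. By class field theory the $3$-rank of $\mathrm{Cl}(K)$ (resp. $\mathrm{Cl}(K^*)$) equals $\dim_{\mathbb{F}_3}\mathrm{Cl}(K)/3$ (resp. the analogous quantity for $K^*$), i.e. the number of independent everywhere-unramified cyclic cubic extensions. Using the norm and transfer maps between $F$ and its quadratic subfields, together with the triviality of $\mathrm{Cl}(k)$, I would identify the two $\sigma$-eigenspaces of $\mathrm{Cl}(F)[3]$ with $\mathrm{Cl}(K)[3]$ and $\mathrm{Cl}(K^*)[3]$ respectively, so that $r$ and $s$ become the $\mathbb{F}_3$-dimensions of the two opposite eigenspaces of the $3$-torsion of $\mathrm{Cl}(F)$.

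The core of the proof is a Kummer-theoretic duality between these eigenspaces. Since $\zeta_3\in F$, cyclic cubic extensions of $F$ are classified by $F^\times/(F^\times)^3$, and I would single out the Selmer-type subgroup $B=\{\,[\alpha]\in F^\times/(F^\times)^3 : (\alpha)=\mathfrak{a}^3\text{ for some fractional ideal }\mathfrak{a}\,\}$, which sits in a short exact sequence
$$1\longrightarrow \mathcal{O}_F^\times/(\mathcal{O}_F^\times)^3\longrightarrow B\longrightarrow \mathrm{Cl}(F)[3]\longrightarrow 1 .$$
Kummer theory furnishes a nondegenerate pairing (via the cubic Hilbert symbol at the primes above $3$) between $\mathrm{Cl}(F)[3]$ and a quotient of $B$ that twists the $\sigma$-action by $\omega$; concretely, it matches the $\chi$-eigenspace on one side with the $\omega\chi^{-1}$-eigenspace on the other. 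As $\Delta$ has only the two characters $\{\mathbf{1},\omega\}$, this reflection interchanges the $(+)$- and $(-)$-eigenspaces, and comparing dimensions immediately yields the lower bound $s\le r$.

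The remaining inequality $r\le s+1$ is where the real obstacle lies, and it is governed entirely by units. The exact sequence above shows that the discrepancy between the two eigenspace dimensions of $B$ is controlled by the $\sigma$-eigenspaces of $\mathcal{O}_F^\times/(\mathcal{O}_F^\times)^3$. By Dirichlet's unit theorem the unit ranks of the real field $K^*$ and the imaginary field $K$ differ by exactly one, and this single extra unit (the fundamental unit of $K^*$, together with the presence of $\zeta_3$) can contribute at most one dimension to the relevant eigenspace. Thus I expect the hard step to be the precise local analysis at the prime above $3$: one must decide, through the cubic Hilbert symbol and the ramification conditions there, whether this fundamental unit is locally a cube, and check that the ramified primes dividing $D$ impose matching conditions on both sides. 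It is exactly this local defect that can raise $r$ above $s$, but never by more than one. Carrying out this bookkeeping carefully pins the gap at $0$ or $1$ and completes the proof that $s\le r\le s+1$.
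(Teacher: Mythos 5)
The paper does not prove this statement at all: it is quoted verbatim as Scholz's reflection theorem and imported from \cite{SC}, so there is no internal proof to compare yours against. Judged on its own terms, your sketch follows the standard modern route to the Spiegelungssatz (essentially Leopoldt's formulation, as in Washington's treatment): pass to the biquadratic field $F=\mathbb{Q}(\sqrt{-3},\sqrt{-D})$, decompose the $3$-class group into eigenspaces for the $(\mathbb{Z}/2\mathbb{Z})^2$-action (which is legitimate since $\gcd(4,3)=1$), identify the eigenspace for the character cutting out $K=\mathbb{Q}(\sqrt{-D})$ with the $3$-part of $\mathrm{Cl}(K)$ and likewise for $K^*=\mathbb{Q}(\sqrt{3D})$ (using that $\mathbb{Q}$ and $\mathbb{Q}(\sqrt{-3})$ have trivial class group), and then let Kummer theory, which twists the Galois action on radicals by the mod-$3$ cyclotomic character $\omega$, exchange these two eigenspaces. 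This is the right architecture, and the observation that the asymmetry comes from the unit ranks (the single fundamental unit of the real field, sitting in the correct eigenspace of $\mathcal{O}_F^\times/(\mathcal{O}_F^\times)^3$, versus $\zeta_3$ in the $\omega$-eigenspace) correctly locates the source of the ``$+1$''.

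That said, as written the argument is an outline rather than a proof, and both inequalities rest on steps you explicitly defer. For $s\le r$ you assert that ``comparing dimensions immediately yields the lower bound,'' but the comparison is not immediate: unramified cyclic cubic extensions of $F$ correspond not to all of $B$ but to the subgroup of classes $[\alpha]$ with $(\alpha)=\mathfrak{a}^3$ \emph{and} $F(\sqrt[3]{\alpha})/F$ unramified above $3$, and one must track how this local condition at the unique prime of $F$ above $3$ interacts with the eigenspace decomposition before either inequality falls out; the direction of each inequality is exactly the content of that bookkeeping. For $r\le s+1$ you would additionally need the precise statement that the relevant eigenspace of $\mathcal{O}_F^\times/(\mathcal{O}_F^\times)^3$ is at most one-dimensional (true, since the free part of $\mathcal{O}_F^\times$ has rank one and is generated, up to index prime to $3$ and roots of unity, by the fundamental unit of $K^*$), together with the count showing the kernel of $B\to\mathrm{Cl}(F)[3]$ inflates the reflected dimension by at most that amount. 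None of this is wrong, and the plan would succeed if executed, but the two places you flag as ``the hard step'' are precisely where the theorem lives; until they are carried out the inequalities are not established. Since the paper only uses the result as a citation, the economical fix is simply to cite Scholz or a textbook proof rather than to complete this sketch.
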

We denote by $h(d)$ the class number of $\mathbb{Q}(\sqrt{d})$ for any square-free integer $d$. Ankeny, Artin and Chowla \cite[Theorem II]{AAC} (also see, \cite{HS}) proved the following result relating the class numbers of real and imaginary quadratic fields.
\begin{thma}\label{thmB} Let
$d\equiv 1\pmod 3$ be a square-free positive integer and $D= 3d$. Assume that $T$ and $U$ are the coefficients of the fundamental unit of $\mathbb{Q}(\sqrt{D})$. Then $$Th(-d)+Uh(D)\equiv 0\pmod 3.$$
\end{thma}
 We now deduce the following proposition from Theorems \ref{thmsc} and \ref{thmB}. 
 \begin{prop}\label{propcfu}
 Let $D$ be as in Theorem \ref{thm}. Then $3$ does not divide the coefficients of the fundamental unit of $\mathbb{Q}(\sqrt{6D})$. 
\end{prop}

\begin{proof}
Since $D\equiv 2\pmod 3$, so that $2D\equiv 1\pmod 3$. Also, $2D$ is square-free as $D$ is odd and square-free. Therefore by Theorem \ref{thmsc}, $h(6D)\not\equiv 0\pmod 3$ since $h(-2D)\not\equiv 0\pmod 3$.

Let $\varepsilon =T+U\sqrt{6D}$ be the fundamental unit in $\mathbb{Q}(\sqrt{6D})$. Then by Theorem \ref{thmB},
$Th(-2D)+Uh(6D)\equiv 0\pmod 3$.  These together conclude that both $T$ and $U$ are not divisibly by $3$.
\end{proof}
The following Proposition plays crucial role in the proof of Theorem \ref{thm}. Recall that $E_{-D}:~ y^2=x^3-2D^3$ is the quadratic twist of the curve $y^2=x^3+2$. 
\begin{prop}\label{prop1}
Let $D$ be as in Theorem \ref{thm}. Then $$\#\{(x,y)\in E_{-D}(\mathbb{Q}): \text{ord}_p(y)\leq 0,~\text{ for all }  p\mid 6D,~ p\text{ prime}\}=0.$$
\end{prop}

\begin{proof}
To prove this proposition, it is sufficient to show that the equation 
\begin{equation}\label{eq2.1}
y^2=x^3-2D^3z^6
\end{equation}
has no integer solutions in $x,y,z$ with $\gcd(x, y, z)=1$, $\gcd(y,D)=1$ and $z\ne 0$. Without loss of generality, we assume that $(x,y,z)$ is an integer solution of \eqref{eq2.1} such that $y$ and $z$ are positive as well as $z$ is minimal. We can exclude the cases where one or both of $x$ and $y$ are even since these cases would imply that $z$ is even too, which is a contradiction. Thus the only remaining possibility is that both $x$ and $y$ are odd.

Since $D$ is square-free and $\gcd(y,D)=1$, so that $\gcd(x,2D)=1$ and $\gcd(y,2D)=1$. 
We now rewrite \eqref{eq2.1} as follows:
\begin{equation}\label{eq2.2}
\left(y+Dz^3\sqrt{-2D}\right)\left(y-Dz^3\sqrt{-2D}\right)=x^3.
\end{equation} 
Utilizing $\gcd(x,y,z)=1$ and $\gcd(x,2D)=1$, we observe that $\gcd(y+Dz^3\sqrt{-2D}, y-Dz^3\sqrt{-2D})=1$. 
Therefore \eqref{eq2.2} gives 
\begin{equation}\label{eq2.3}
y+Dz^3\sqrt{-2D}=\left(a+b\sqrt{-2D}\right)^3
\end{equation}
for some integers $a$ and $b$ satisfying $\gcd(a,b)=1$ and 
\begin{equation}\label{eq2.4}
a^2+2b^2D=x.
\end{equation}
This shows that $a$ is odd as $x$ is odd. 
Equating the real and imaginary part from \eqref{eq2.3}, we get
\begin{equation}\label{eq2.5}
y=a^3-6ab^2D,
\end{equation}
\begin{equation}\label{eq2.6}
Dz^3=3a^2b-2b^3D.
\end{equation}
Since $D\ne 3$ and it is square-free, so that reading \eqref{eq2.6} modulo $D$, we get $ab\equiv 0\pmod D$. Now \eqref{eq2.4} together with the fact that $\gcd(x,D)=1$ implies $\gcd(a, D)$=1. Thus $b\equiv 0\pmod D$ and we write $b=Db_1$ for some integer $b_1$. Hence \eqref{eq2.6} gives
\begin{equation}\label{eq2.7}
z^3=b_1(3a^2-2b_1^2D^3).
\end{equation}
Reading \eqref{eq2.7} modulo $3$, we get $z\equiv b_1D\pmod 3$.
Utilizing this in \eqref{eq2.7} and then reading modulo $9$, we get
$$b_1^3D^3\equiv 3a^2b_1-2b_1^3D^3\pmod 9.$$
This implies $b_1(a^2-b_1^2D^3)\equiv 0\pmod 3$. If $a^2-b_1^2D^3\equiv 0\pmod 3$, then $1-2\equiv 0 \pmod 3$, since $D\equiv 2\pmod 3$.  Therefore $3\mid b_1$ and thus we write $b_1=3b_2$ for some integer $b_2$. We utilize this in \eqref{eq2.7} to get $3\mid z$, and put $z=3z_1$. Therefore \eqref{eq2.7} takes the form:
\begin{equation}\label{eq2.8}
3z_1^3=b_2(a^2-6b_2^2D^3).
\end{equation}
Since $\gcd(a, b_2)=\gcd(a,b)=1$ and $3\nmid a$, so that \eqref{eq2.8} gives $3\mid b_2$. We put $b_2=3b_3$ for some integer $b_3$. Thus \eqref{eq2.8} becomes
\begin{equation*}
z_1^3=b_3(a^2-54b_3^2D^3).
\end{equation*}
It is clear that $\gcd(b_3, a^2-54b_3^2D^3)=1$ since $\gcd(a,b_3)=1$ due to $\gcd(a,b)=1$. Thus there exist two integers $A$ and $B$ such that $b_3=B^3$ and $a^2-54b_3^2D^3=A^3$. These together give rise to 
\begin{equation}\label{eq2.9}
a^2-54B^6D^3=A^3.
\end{equation} 
It is clear that $3\nmid A$; otherwise $3\mid a$ which is a contradiction as $3\mid b$ too but $\gcd(a,b)=1$. Since $a$ is odd, so that $A$ is odd too. Also if a prime $p\mid \gcd(a,A)$ then by \eqref{eq2.9}, $p\mid D$ and thus by \eqref{eq2.4} we get $p\mid x$ which contradicts to $\gcd(x, D)=1$. Therefore $\gcd(a, A)=1$.

We now rewrite \eqref{eq2.9} as follows:
$$\left(a+3DB^3\sqrt{6D}\right)\left(a-3DB^3\sqrt{6D}\right)=A^3.$$
It is clear that $\gcd\left(a+3DB^3\sqrt{6D},a-3DB^3\sqrt{6D}\right)=1$ since $\gcd(a,2A)=1$.
Since the class number of $\mathbb{Q}(\sqrt{-2D})$ is not divisible by $3$, so that by 
Theorem \ref{thmsc} the class number of $\mathbb{Q}(\sqrt{6D})$ is not divisible by $3$. Therefore, 
\begin{equation}\label{eq2.10}
a+3DB^3\sqrt{6D}=u\left(\alpha+\beta\sqrt{6D}\right)^3,
\end{equation}
where $u$ is unit in $\mathbb{Q}(\sqrt{6D})$ and $\alpha, \beta$ are integers such that $\gcd(\alpha, \beta)=1$ as $\gcd(a, 3BD)=1$. Since $6D\equiv 2\pmod 4$, so that the fundamental unit, $\varepsilon$ in $\mathbb{Q}(\sqrt{6D})$ is of the form $ \varepsilon=T+U\sqrt{6D}$. Therefore the only possibilities for $u$ are $1, \varepsilon$ and $\varepsilon^2$ since the higher powers of $\varepsilon$ can be  absorbed in $(\alpha+\beta\sqrt{6D})^3$. 

We first assume that $u=1$. Then \eqref{eq2.10} gives:
\begin{equation}\label{eq2.11}
a=\alpha^3+18\alpha\beta^2D,
\end{equation} 
\begin{equation}\label{eq2.12}
DB^3=\alpha^2\beta+2\beta^3 D.
\end{equation}
Since $D$ is square-free, so that \eqref{eq2.12} shows that $D\mid \alpha\beta$. Clearly, $\gcd(D,\alpha)=1$; otherwise by \eqref{eq2.11} $\gcd(D,\alpha)\mid a$ which contradicts to $\gcd(a,D)=1$. Thus $D\mid \beta$, and we put $\beta=D\beta_1$ for some rational integer $\beta_1$. Therefore \eqref{eq2.12} becomes 
$$B^3=\beta_1(\alpha^2+2\beta_1^2D^3).$$
It is clear that $\gcd(\beta_1, \alpha^2+2\beta_1^2D^3)=1$ as $\gcd(\alpha, \beta_1)=1$. Therefore there exist two integers $\beta_2$ and $\alpha_1$ such that $\beta_1=\beta_2^3$ and $\alpha^2+2\beta_1^2D^3=\alpha_1^3$. These further imply 
$$\alpha^2=\alpha_1^3-2D^3\beta_2^6.$$
This shows that $(\alpha_1, \alpha, \beta_2)$ is another solution of $\eqref{eq2.1}$ with $\gcd(\alpha,D)=1$ and $\beta_2\ne 0$. Furthermore, 
$$|\beta_2|=|\beta_1|^{\frac{1}{3}}<|B|=|b_3|^{\frac{1}{3}}=|\frac{b_2}{3}|^{\frac{1}{3}}=|\frac{b_1}{9}|^{\frac{1}{3}}<b_1|^{\frac{1}{3}}.$$
Also, \eqref{eq2.7} gives us  
$|z|=|b_1(3a^2-2b_1^2D^3)|^{\frac{1}{3}}>|b_1|^{\frac{1}{3}}$, and thus $|\beta_2|<|z|$. This contradicts the minimality of $z$. 

We now consider the case where $u=\varepsilon$. Then \eqref{eq2.10} gives the following:
\begin{equation}\label{eq2.13}
a=T\alpha(\alpha^2+18\beta^2D)+18DU\beta(\alpha^2+2D\beta^2),
\end{equation} 
\begin{equation}\label{eq2.14}
3DB^3=3T\beta(\alpha^2+6\beta^2D)+U\alpha(\alpha^2+18D\beta^2).
\end{equation}
We read \eqref{eq2.13} modulo $3$ to get $\alpha^3T\equiv a\pmod 3$. This imples $3\nmid\alpha$  and $3\nmid T$ as $3\nmid a$. Reading \eqref{eq2.14} modulo $3$, we get 
$U\alpha^3\equiv 0\pmod 3$. This implies $3\mid U$ since $3\nmid \alpha$. This contradicts to Proposition \ref{propcfu}.

Finally if $u=\varepsilon^2$, then \eqref{eq2.10} gives 
\begin{equation}\label{eq2.15}
a=(T^2+6DU^2)(\alpha^3+18\alpha\beta^2D)+12DTU(3\alpha^2\beta+6D\beta^3),
\end{equation}
\begin{equation}\label{eq2.16}
3DB^3=(T^2+6DU^2)(3\alpha^2\beta+6D\beta^3)+2TU(\alpha^3+18\alpha\beta^2D).
\end{equation}
Reading \eqref{eq2.15} modulo $3$, we get $3\nmid T$ and $3\nmid \alpha$ since $3\nmid a$. We finally read \eqref{eq2.16} to get $TU\alpha\equiv 0\pmod 3$ which implies $U\equiv 0\pmod 3$. This again contradicts to Proposition \ref{propcfu}. 
\end{proof}

We also need a similar result for the quadratic twist, $E_{3D}:~ y^2=x^3+54D^3$ of the curve $y^2=x^3+2$.  

\begin{prop}\label{prop2}
Let $D$ be as in Theorem \ref{thm}. Then $$\#\{(x,y)\in E_{3D}(\mathbb{Q}):ord_p(y)\leq 0, ~\text{ for all }p\mid 6D,~ p\text{ prime}\}=0.$$
\end{prop}
\begin{proof}
The proof is similar to that of Proposition \ref{prop1}. However, we give the proof for the sake of completeness.  It is sufficient to prove that the equation 
\begin{equation}\label{eq2.17}
y^2=x^3+2(3D)^3z^6
\end{equation}
has no integer solutions in $x,y,z$ with $\gcd(x,y,z)=1$, $\gcd(y,D)=1$ and $z\ne 0$. Without loss of generality, we assume that $(x,y,z)$ is an integer solution of \eqref{eq2.17} such that $y$ and $z$ are positive as well as $z$ is minimal. We can exclude the cases where $\gcd(x,6)\ne1$ or $\gcd(y,6)\ne1$ as these cases would imply that $\gcd(x,y,z)\ne 1$. Thus the only remaining possibility is that both $x$ and $y$ are odd as well as $3\nmid xy$.

Since $D$ is square-free, $\gcd(y,D)=1$ and $\gcd(x,y,z)=1$, so that $\gcd(x,6D)=1$ and $\gcd(y,6D)=1$. 

We now rewrite \eqref{eq2.17} as 
\begin{equation}\label{eq2.18}
(y+3Dz^3\sqrt{6D})(y-3Dz^3\sqrt{6D})=x^3.
\end{equation}
It is clear that $\gcd(y+3Dz^3\sqrt{6D}, y-3Dz^3\sqrt{6D})=1$ as $\gcd(x,y,z)=\gcd(x,2D)=1$. 
Since $3$ does not divide the class number of $\mathbb{Q}(\sqrt{-2D})$, so that by 
Theorem \ref{thmsc}, $3$ does not divide the class number of $\mathbb{Q}(\sqrt{6D})$. Therefore from \eqref{eq2.18} we can write
\begin{equation}\label{eq2.19}
y+3Dz^3\sqrt{6D}=u(a+b\sqrt{6D})^3,
\end{equation}
where $u$ is unit in $\mathbb{Q}(\sqrt{6D})$ and $a, b$ are integers such that $\gcd(a, b)=1$ as $\gcd(y, 3Dz)=1$. Since $6D\equiv 2\pmod 4$, so that the fundamental unit in $\mathbb{Q}(\sqrt{6D})$ is of the form $ T+U\sqrt{6D}$. Therefore $u$ is given by $(T+U\sqrt{6D})^\delta$ with $\delta=0,1,2$ as the higher powers can be absorbed in  $(a+b\sqrt{6D})^3$. 

First consider the case when $\delta=0$. Then \eqref{eq2.19} implies
\begin{equation}\label{eq2.20}
y=a^3+18ab^2D,
\end{equation} 
\begin{equation}\label{eq2.21}
Dz^3=a^2b+2b^3 D.
\end{equation}
The equation \eqref{eq2.21} shows that $D\mid ab$ as $D$ is square-free. It is clear that $\gcd(D,a)=1$; otherwise by \eqref{eq2.20} $\gcd(D,a)\mid y$ which contradicts to $\gcd(y,D)=1$. Hence $D\mid b$, and we write $b=Db_1$ for some integer $b_1$. Thus \eqref{eq2.21} implies 
$$z^3=b_1(a^2+2b_1^2D^3).$$
Since $\gcd(a,b)=1$, so that $\gcd(a, b_1)=1$ and hence $\gcd(b_1, a^2+2b_1^2D^3)=1$. Therefore we can find two integers $B$ and $A$ satisfying $b_1=B^3$ and $a^2+2b_1^2D^3=A^3$. These further give rise to 
$$a^2=A^3-2D^3B^6.$$
This shows that $(A, a, B)$ is another solution of $\eqref{eq2.17}$ satisfying $\gcd(a,D)=1$ and $B\ne 0$. Moreover, 
$$|B|=|b_1|^{\frac{1}{3}}=|z/(a^2+2b_1^2D^3)^{\frac{1}{3}}|<|z|,$$ which contradicts the minimality of $z$. 

We now consider the case where $\delta=1$. In this case, \eqref{eq2.19} gives:
\begin{equation}\label{eq2.22}
y=aT(a^2+18Db^2)+18DUb(a^2+2b^2D), 
\end{equation}
\begin{equation}\label{eq2.23}
3Dz^3=3Tb(a^2+2Db^2)+Ua(a^2+18b^2D). 
\end{equation}
We read \eqref{eq2.22} modulo $3$ to get $a^3T\equiv y\pmod 3$. This imples $3\nmid a$  and $3\nmid T$ as $3\nmid y$. Reading \eqref{eq2.23} modulo $3$, we get 
$Ua^3\equiv 0\pmod 3$. This implies $3\mid U$ since $3\nmid a$ which contradicts to Proposition \ref{propcfu}.

Finally if $\delta=2$, then \eqref{eq2.19} provides 
\begin{equation}\label{eq2.24}
y=(T^2+6DU^2)(a^3+18ab^2D)+36DTUb(a^2+2Db^2),
\end{equation}
\begin{equation}\label{eq2.25}
3Dz^3=(T^2+6DU^2)(3a^2b+6Db^3)+2TUa(a^2+18b^2D).
\end{equation}
Reading \eqref{eq2.24} modulo $3$, we get $3\nmid aT$ since $3\nmid y$. We finally read \eqref{eq2.25} to get $TUa\equiv 0\pmod 3$ which implies $U\equiv 0\pmod 3$. This once again contradicts to Proposition \ref{propcfu}. 
\end{proof}

We also need the following two results in order to complete the proof of Theorem \ref{thm}. The first result is restated from \cite[Ex. 10.19, p. 323]{SI}.

\begin{lem}\label{lsi1} 
For a sixth-power-free integer $m$, let $E(m):~ y^{2}=x^{3}+m$. Then
$E(m)(\mathbb{Q})_{\text{tors}}|6$. More precisely, 
$$
E(m)(\mathbb{Q})_{\text{tors}}\cong \begin{cases}  
\mathbb{Z}/6\mathbb{Z} & \text{ if } m=1,
\\ 
\mathbb{Z}/ 3\mathbb{Z} & \text{ if } m\neq 1 \text{ is a cube}, \text{ or } m=-432,
\\
\mathbb{Z}/ 2\mathbb{Z} & \text{ if } m\neq 1 \text{ is a square},
\\
1& \text{ otherwise}.
\end{cases}
$$ 
\end{lem}
The  following lemma can be deduced from \cite[p. 203]{SI}.
\begin{lem}\label{lsi2} Let $E(m)$ be as in Lemma \ref{lsi1}. Let $P(x,y) \in  E(m)$. Then
$$
\left(x([2]P),~ y([2]P)\right)=\left( \frac{9x^{4}-8y^{2}x}{4y^{2}},~ \frac{-27x^{6}+36y^{2}x^{3}-8y^{4}}{8y^{3}}\right).
$$
\end{lem}

\begin{proof}[Proof of Theorem \ref{thm}]
We assume that $m=-2D^3,~ 54D^3$. Then $E_{-D}:~ y^2=x^3-2D^3$ and $E_{3D}:~ y^2=x^3+54D^3$ can be represented by $E(m)$. Thus utilizing Proposition \ref{prop1} and Proposition \ref{prop2}, we can conclude that
$$
E(m)(\mathbb{Q})=\left\lbrace (x,y) \in \mathbb{Q}^2:~y^2=x^3+m,~\text{ ord}_p(y)\geq 1,~\text{ for all } p\mid 3m, ~p\text{ prime} \right\rbrace. 
$$
Now in order to complete the proof, it suffices to show that $E(m)(\mathbb{Q})$ is finite.
For a prime divisor $p$ of $3m$, $y^2=x^3+m$ provides us the following:
\begin{itemize}
\item[(I)] $\text{ord}_p(y)\geq 1$ if and only if $\text{ord}_p(x)=1$ when $p\ne 3$.
\item[(II)] $\text{ord}_3(y) \geq 1$ if and only if $
\text{ord}_3(x)=\begin{cases}
1 & \text{if } 3\mid m,
\\ 
0 & \text{if } 3\nmid m.
\end{cases}
$ 
\end{itemize}
Applying Lemma \ref{lsi1}, we obtain 
$E(m)(\mathbb{Q})_{tors} = O$. \\
We assume, on the contrary, that $E(m)(\mathbb{Q}) \neq E(m)(\mathbb{Q})_{tors}$. Then we can find $P(x,y) \in E(m)(\mathbb{Q})\setminus E(m)(\mathbb{Q})_{tors}$ and a prime divisor $p$ of $3m$ such that $ord_p(y)\geq 1$. Applying Lemma \ref{lsi2} and utilizing induction on $n$, one gets
$$\text{ord}_p(y([2^{n}]P))\leq 0 \begin{cases} \forall n\geq 1 & \text{ if }p\ne 3,\\
\forall n\geq 2 & \text{ if }p= 3.\end{cases}$$

Assume that $m$ has $t$ distinct primes factors and we put $n=2^{t+1}$.
Then for any prime factor $p$ of $3m$, one gets $\text{ord}_p(y([2^{n}]P)) \leq 0$.
This is a contradiction. 

We can say by the Cohen-Lenstra heuristics \cite{CL} that there exist infinitely many square-free positive integers $D$ such that $D\equiv 2\pmod 3$ and $h(-2D)\not\equiv 0\pmod 3$. For each of these $D$, ranks of $E_{-D}(2)$ and $E_{3D}(2)$ are zero. This complete the proof.     
\end{proof}
\section{Numerical examples}
We compute the ranks of the families of curves, $E_{-D}:~ y^2=x^3-2D^3$ and $E_{3D}:~ y^2=x^3+54D^3$ for square-free positive integers $D\leq 10000$ satisfying the assumptions of Theorem \ref{thm}. However, we list some them in Table \ref{T2}. We use MAGMA \cite{MA} for these computations.

\begin{center}
\tiny
\begin{longtable}{cccc|cccc|cccc}
\caption{Numerical examples of Theorem \ref{thm}. Here, $\rho(E)$ denotes rank of $E$.} \label{T2} \\

\hline \multicolumn{1}{c}{$D$} & \multicolumn{1}{c}{$h(-2D)$} & \multicolumn{1}{c}{$\rho(E_{-D})$}& \multicolumn{1}{c}{$\rho(E_{3D})$} & \multicolumn{1}{|c}{$D$}& \multicolumn{1}{c}{$h(-2D)$} & \multicolumn{1}{c}{$\rho(E_{-D})$} & \multicolumn{1}{c|}{$\rho(E_{3D})$}& \multicolumn{1}{c}{$D$} & \multicolumn{1}{c}{$h(-2D)$}&\multicolumn{1}{c}{$\rho(E_{-D})$} & \multicolumn{1}{c}{$\rho(E_{3D})$}\\ \hline 
\endfirsthead

\multicolumn{12}{c}%
{{\bfseries \tablename\ \thetable{} -- continued from previous page}} \\
\hline \multicolumn{1}{c}{$D$} & \multicolumn{1}{c}{$h(-2D)$} & \multicolumn{1}{c}{$\rho(E_{-D})$}& \multicolumn{1}{c}{$\rho(E_{3D})$} & \multicolumn{1}{|c}{$D$}& \multicolumn{1}{c}{$h(-2D)$} & \multicolumn{1}{c}{$\rho(E_{-D})$} & \multicolumn{1}{c|}{$\rho(E_{3D})$}& \multicolumn{1}{c}{$D$} & \multicolumn{1}{c}{$h(-2D)$}&\multicolumn{1}{c}{$\rho(E_{-D})$} & \multicolumn{1}{c}{$\rho(E_{3D})$}\\ \hline 
\endhead
\hline \multicolumn{12}{r}{{Continued on next page}} \\ \hline
\endfoot
\hline 
\endlastfoot

5 & 2 & 0& 0& 11 &2 &0& 0&17& 4& 0& 0\\
23 &4 &0 &0 &29 &2& 0& 0&35& 4 &0 &0 \\
41& 4& 0 &0 &47 &8& 0& 0&65& 4& 0& 0\\
71 &4& 0& 0&77& 8& 0& 0& 83& 10& 0& 0\\
89 &8 &0& 0&95& 4& 0& 0&113 &8 &0 &0 \\
119 &8& 0& 0&155& 8& 0& 0&161& 8 &0 &0 \\
173 &10& 0& 0&191& 8& 0& 0&197& 10& 0& 0\\
203& 16& 0& 0&209& 8& 0& 0&221 &8 &0 &0 \\
227& 14 &0& 0&233& 8& 0& 0&239 &8 &0& 0\\
251& 14& 0& 0&257& 16& 0& 0 &269& 10& 0& 0\\ 
281& 8& 0& 0&287& 16& 0& 0&299& 8& 0& 0\\
317& 14& 0& 0&323 &16 &0& 0&329& 8 &0 &0 \\
347 &10& 0& 0&371& 8& 0& 0&377 &20 &0 &0 \\
389 &14& 0& 0&395& 16& 0& 0&419& 14& 0& 0\\
431& 8& 0& 0&437& 20& 0& 0&455& 16& 0& 0\\
467& 26& 0& 0&473 &16& 0 &0&479& 16& 0& 0\\
491& 10& 0& 0&497& 16& 0& 0&503& 20& 0& 0\\
527& 16 &0& 0&533& 20& 0& 0&551& 20 &0& 0\\
557& 22& 0& 0&563& 22& 0& 0&623& 16& 0& 0\\
635& 20& 0 &0 &647& 28& 0& 0&659& 10& 0& 0\\
671& 20& 0& 0&677& 22& 0& 0&689 &20& 0& 0\\
695& 20& 0& 0&701& 14& 0& 0&707& 28& 0& 0\\
713 &32& 0& 0&719 &16& 0& 0& 731& 16& 0& 0\\
737& 16& 0& 0&743& 20& 0& 0&749 &16 &0 &0\\
755& 16& 0& 0&761& 20& 0& 0&767& 20& 0& 0\\
773& 34& 0& 0&785& 20& 0 &0 &791& 16& 0& 0\\
797& 34& 0& 0&803& 28& 0& 0&815& 28& 0& 0\\
821 &14& 0& 0& 827& 22& 0& 0&839& 20& 0 &0\\
851 &20& 0 &0&857& 20& 0 &0 &869& 16& 0 &0\\
887 &20& 0& 0& 893& 20& 0& 0 &899& 20& 0 &0 \\
905& 20& 0 &0 &911 &16& 0& 0& 923 &28& 0 &0 \\
929& 20& 0& 0&935& 16 &0 &0&953& 20& 0& 0\\
959& 16& 0 &0& 965& 20& 0& 0&971& 22& 0 &0\\
977& 28& 0& 0& 1001& 16& 0& 0&1013& 34& 0& 0
 \end{longtable}
\end{center}

\section*{Acknowledgements}
The authors are grateful to Prof. Kalyan Chakraborty for his careful reading, helpful comments and suggestions.  The authors are grateful to the anonymous referee(s) for valuable comments, which have improved the presentation of the paper. This work was supported by SERB CRG grant (No. CRG/2023/007323) and MATRICS grant (No. MTR/2021/000762), Govt. of India.


\end{document}